\documentclass[a4paper,12pt]{amsart}

\usepackage{epsfig}
\usepackage{amsthm,amsfonts}
\usepackage{amssymb,graphicx,color}
\usepackage[all]{xy}
\usepackage{verbatim}
\usepackage{hyperref}
\usepackage{enumitem}

\usepackage[a4paper,top=2.5cm,bottom=2.5cm,left=3cm,right=3cm]{geometry}

\newtheorem{theorem}{Theorem}[section]
\newtheorem*{theorem*}{Theorem}


\newcommand{\C}{\mathbb{C}}

\title[On finite polynomial mappings ]{On finite polynomial mappings}

\author{Zbigniew Jelonek}
\address{Zbigniew Jelonek -
Instytut Matematyczny, Polska Akademia Nauk, \'Sniadeckich 8, 00-956 Warszawa, Poland.}
\email{najelone@cyf-kr.edu.pl}

\subjclass{14 D 05, 14 R 99.}

\begin{document}

\begin{abstract}
Let $X\subset \C^n$ be a smooth irreducible  affine variety of dimension $k$ and let $F: X\to \C^m$ be a polynomial mapping.
We prove that if $m\ge k$, then there is a Zariski open dense subset $U$ in the space of linear mappings ${\mathcal L}(\C^n,\C^m)$ 
such that for every $L\in U$ the mapping $F+L$ is a finite mapping. Moreover,  we can choose $U$ in this way, that all  mappings $F+L; L\in U$ are topologically equivalent.
\end{abstract}

\maketitle

 \section{Introduction}
Assume that we have an algebraic family $\mathcal F$ of polynomial generically-finite mappings $f_p: X\to \C^m; \ p\in {\mathcal F}$, where
$X$ is a smooth irreducible affine variety. It is important to know the behavior of proper mappings in a such family. In general, proper mappings does not form an algebraic subset of $\mathcal F$, but only constructible one.  However we show in this note that we have some regular behavior in such  family.

 As an application we show  that if $X\subset \C^n$ is a smooth irreducible affine variety of dimension $k$ and let $F: X\to \C^m$ be a polynomial mapping.
If $m\ge k$, then there exists a Zariski open dense subset $U$ in the space of linear mappings ${\mathcal L}(\C^n,\C^m)$ 
such that:

a) for every $L\in U$ the mapping $F+L$ is a finite mapping. 

b) all mappings $F+L, L\in U$ are topologically equivalent.

Let us recall that  mappings $f,g: X\to Y$ are topologically equivalent, if there exist homeomorphisms $\phi: X\to X$ and $\psi: Y\to Y$ such that $f=\psi\circ g\circ \phi.$

\medskip

\section{Main results}
Let us start with the following:

\begin{theorem}\label{rodzina}
Let $P, X, Y$ be  smooth irreducible  affine algebraic varieties and let $F: P\times X\to P\times Y$ be a generically finite mapping.
The mapping $F$ induces a family ${\mathcal F}=\{ f_p(\cdot)=F(p,\cdot), \ p\in P \}.$ Then either there exists a Zariski open dense subset 
$U\subset P$ such that for every $p\in P$  a  mapping $f_p$ is proper,
or there exists a Zariski open dense subset 
$V\subset P$ such that for every $p\in P$ a mapping  $f_p$ is not proper.

In the first case we have:

a) foe every  non-proper mappings $f_p$ in the family $\mathcal F$ we have  $\mu(f_p)<\mu(F)$, where $\mu(f)$ denotes the geometric degree of $f,$

b) all generic mappings $f_p$ are topologically equivalent, i.e., there exists a Zariski open dense subset $W\subset P$, such that for every $p,q\in W$ mappings $f_p$ and $f_q$ are topologically equivalent.
\end{theorem}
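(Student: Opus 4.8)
The plan is to first establish the stated alternative by a constructibility argument, and then, assuming we are in the proper (``first'') case, to treat (a) by a conservation-of-degree argument and (b) by a family version of Thom's isotopy lemma.

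\emph{The dichotomy.} I would attach to $F$ its set of non-properness $S_F\subset P\times Y$, i.e.\ the locus of points $q=\lim F(z_k)$ that are limits of images of sequences $z_k\in P\times X$ escaping to infinity; recall that a dominant generically finite polynomial map is proper precisely when its set of non-properness is empty. Choosing a simultaneous projective closure $P\times X\hookrightarrow P\times\overline X$ and letting $\Gamma$ be the closure of the graph of $F$, one obtains $S_F$ from $\Gamma\cap\bigl(P\times(\overline X\setminus X)\times Y\bigr)$ by a projection (elimination), so $S_F$ is a closed algebraic set and the locus $\{p\in P:\ f_p\ \text{is not proper}\}$ is, up to a lower-dimensional discrepancy, the image $\pi_P(S_F)$ under the projection $\pi_P\colon P\times Y\to P$. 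By Chevalley's theorem this image is constructible; since $P$ is irreducible, either the proper locus or its complement contains a Zariski open dense subset, which is exactly the asserted alternative. Concretely, the first case occurs iff $S_F$ does \emph{not} dominate $P$.

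\emph{Part (a).} Work in the first case and let $d=\mu(F)$. Fix a point $y\in Y$ generic for the whole family and consider $W_y=F^{-1}(P\times\{y\})\subset P\times X$, a constructible set of dimension $\dim P$ whose projection to $P$ is generically finite of degree $d$; its fibre over generic $p$ is $f_p^{-1}(y)$, which has $d$ points since $f_p$ is proper. Passing to the closure $\overline{W}_y\subset P\times\overline X$, the projection $\overline{W}_y\to P$ is proper and of degree $d$, so over any $p_0$ the fibre has total length $d$, split between the finite part lying in $P\times X$ (which counts $\#f_{p_0}^{-1}(y)$, hence contributes to $\mu(f_{p_0})$) and the part lying over the boundary $\overline X\setminus X$. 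Thus $\mu(f_{p_0})\le d$ always, and the content is that $\mu(f_{p_0})=d$ forces $f_{p_0}$ to be proper: if no mass escapes to infinity over generic $y$, then near $p_0$ the proper family $\overline{W}_y\to P$ keeps its fibres inside $P\times X$, and one upgrades this to integrality of $\C[X]$ over $f_{p_0}^*\C[Y]$, i.e.\ properness of $f_{p_0}$. Contrapositively, a non-proper $f_{p_0}$ loses at least one point to the boundary for generic $y$, giving $\mu(f_{p_0})<d=\mu(F)$. I expect this implication---that non-properness of the limit map \emph{persists} as escape-to-infinity over a \emph{generic} target point, rather than only over the (lower-dimensional) set $S_{f_{p_0}}$---to be the main obstacle; the examples $f_t(x,y)=(x,\,xy+ty^2)$ show both that the drop is genuine and that an honest proof must rule out a ``full-degree'' non-proper specialization.

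\emph{Part (b).} Here I would invoke Thom's second isotopy lemma (or Verdier's algebraic/semialgebraic version) in the family $F\colon P\times X\to P\times Y$ viewed over the base $P$. First choose a Whitney stratification of the source and target adapted to $F$ and satisfying Thom's $a_F$ condition; then restrict to the Zariski open dense set of $p$ where, by generic smoothness and generic triviality of stratifications, the projection to $P$ is a stratified submersion. Crucially, the isotopy lemma requires $F$ to be proper over the base, and this is exactly supplied by the first case, where $F$ restricted over a suitable Zariski open $W\subset P$ is proper over $W\times Y$. Shrinking to such a $W$, the isotopy lemma yields a stratum-preserving topological trivialization of $F$ over $W$, i.e.\ homeomorphisms making the maps $\{f_p\}_{p\in W}$ fit into a locally trivial family of mappings. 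Since $W$ is a nonempty Zariski open subset of the irreducible variety $P$ it is connected in the Euclidean topology, so local triviality of the family of maps gives, for any $p,q\in W$, homeomorphisms $\phi\colon X\to X$ and $\psi\colon Y\to Y$ with $f_p=\psi\circ f_q\circ\phi$, which is the desired topological equivalence.
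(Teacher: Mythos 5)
Your overall architecture (attach the non-properness set $S_F$, project it to $P$, trivialize topologically over a dense open set for (b)) matches the paper's, but part (a) --- which you yourself flag as ``the main obstacle'' --- is left with a genuine gap, and the same unproven identification also leaves half of your dichotomy open. Concretely, you only get the easy inclusion $\{p : f_p \ \text{not proper}\}\subseteq \pi_P(S_F)$ (a sequence escaping to infinity inside $\{p\}\times X$ produces a point of $S_F$ over $p$); the reverse direction for generic $p$, and the degree drop, are asserted (``up to a lower-dimensional discrepancy'', ``loses at least one point to the boundary for generic $y$'') but not proved. Your conservation-of-number argument cannot close this by itself: non-properness of $f_{p_0}$ is detected only over the special points of its own non-properness set $S_{f_{p_0}}$, a proper hypersurface of $Y$, whereas your fibre count for $\overline W_y\to P$ only sees a \emph{generic} $y$, over which $f_{p_0}$ is automatically proper in the fibrewise sense. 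Nothing in your argument forces mass to escape over a generic $y$, so the implication ``$f_{p_0}$ non-proper $\Rightarrow \mu(f_{p_0})<\mu(F)$'' is not established.

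The paper closes exactly this gap with two inputs from \cite{Je}, \cite{Je1}: (i) a generically finite map $g$ is proper over $y$ if and only if $\sum_{g(x)=y}\mu_x(g)=\mu(g)$, combined with the identity $\mu_{(p,x)}(F)=\mu_x(f_p)$; and (ii) $S_F$ is either empty or of pure dimension $\dim P+\dim X-1$, i.e.\ a hypersurface in $P\times Y$. In the first case of the dichotomy $\overline{\pi(S_F)}$ has dimension at most $\dim P-1$, so by (ii) and semicontinuity of fibre dimension the fibre of $S_F$ over every $p\in\pi(S_F)$ is all of $\{p\}\times Y$; applying (i) at a $y$ generic for $f_p$ then gives $\mu(f_p)\le\sum_{x\in f_p^{-1}(y)}\mu_x(f_p)<\mu(F)$, while for $p\notin\pi(S_F)$ the same criterion shows $f_p$ is proper with $\mu(f_p)=\mu(F)$. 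The purity statement (ii) for $S_F$, or some substitute for it, is the missing ingredient in your write-up; the same multiplicity criterion also supplies the unproved half of your dichotomy (generic non-properness when $\pi_P(S_F)$ dominates $P$). Your part (b) is fine in outline but essentially re-proves, via Thom's second isotopy lemma, the result the paper simply cites (\cite{Je2}, Theorem 4.3); the properness of $F$ over $W\times Y$ that the isotopy lemma requires is indeed available in the first case.
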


\begin{proof}
First of all note that for every $(p,x)\in P\times X$ we have $\mu_{(p,x)}(F)= \mu_x (f_p)$ (here $\mu_x(f)$ denotes the local multiplicity of $f$ in $x$). In the sequel we use the fact that a mapping $g: X\to Y$ is proper
over a point $y\in Y$ if and only if $\sum_{g(x)=y} \mu_x(g)=\mu(g)$ (see \cite{Je}, \cite{Je1}).

Let $S$ be the non-properness set of $F$ (see e.g.  \cite{Je}, \cite{Je1}). If $S=\emptyset$, then all mappings $f_p$ are proper. 
Hence we can assume that $S\not= \emptyset.$ Consider the canonical projection $\pi: S\to P.$ We have two possibilities:

\item{(1)} $\pi(S)$ is dense in $P.$

\item{(2)} $\pi(S)$ is not dense in $P.$

In the case a) a generic mapping $f_p$ is not proper. In the second case 
 note that $S$ has dimension dim $P$ + dim $X -1$ and the fiber of $\pi$ has dimension at most dim $X.$ This immediately implies that
 the set $\overline{\pi(S)}$ is a hypersurface in $M$. Moreover, fibers of $\pi$ are the whole space $X.$ This means that 
 for all $p\in \pi(S)$ we have $\mu(f_p)<\mu(F).$ Of course outside $\pi(S)$ mappings $f_p$ are proper. Two such a generic mappings are topologically equivalent by \cite{Je2}, Theorem 4.3.
\end{proof}

Now we state our main result:

\begin{theorem}
Let $X\subset \C^n$ be a smooth irreducible affine variety of dimension $k$ and let $F: X\to \C^m$ be a polynomial mapping.
If $m\ge k$, then there existss a Zariski open dense subset $U$ in the space of linear mappings ${\mathcal L}(\C^n,\C^m)$ 
such that:

a) for every $L\in U$ the mapping $F+L$ is a finite mapping. 

b) all mappings $F+L, L\in U$ are topologically equivalent.
\end{theorem}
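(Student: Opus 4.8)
The plan is to present the linear perturbations as a single algebraic family and then to invoke Theorem~\ref{rodzina}. I would take $P=\mathcal{L}(\C^n,\C^m)\cong\C^{nm}$, which is smooth, irreducible and affine, and define the polynomial mapping $\Phi\colon P\times X\to P\times\C^m$ by $\Phi(L,x)=(L,\,F(x)+L(x))$; it preserves the projection onto $P$, and its associated family is exactly $\mathcal{F}=\{f_L:=F+L:\ L\in P\}$. Before quoting Theorem~\ref{rodzina} I must check that $\Phi$ is generically finite. I would do this infinitesimally: at a point $x\in X$ the differential of $f_L$ is $dF_x+L|_{T_xX}\colon T_xX\to\C^m$, and since $\dim T_xX=k\le m$, for generic $L$ this map is injective at a generic $x$. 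Hence generic $f_L$ is an immersion at a generic point, its image is $k$-dimensional, and both $f_L$ and $\Phi$ are generically finite.

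Theorem~\ref{rodzina} then yields the dichotomy: either $f_L$ is proper for generic $L$, or $f_L$ is non-proper for generic $L$. The whole argument reduces to landing in the first alternative, i.e.\ to showing that the projection $\pi(S)$ of the non-properness set $S\subset P\times\C^m$ of $\Phi$ is not dense in $P$, equivalently that $f_L$ is proper for generic $L$. I regard this as the crux and the main obstacle. Once it is in hand, finiteness follows formally: if $f_L$ is proper then, by the criterion recalled in the proof of Theorem~\ref{rodzina}, $\sum_{f_L(x)=y}\mu_x(f_L)=\mu(f_L)=\mu(\Phi)<\infty$ for every $y\in\C^m$, so every fibre of $f_L$ is finite; a proper, quasi-finite morphism of affine varieties is finite, which gives (a).

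To establish generic properness I would argue at infinity. Let $\overline{X}\subset\bbP^n$ be the projective closure and $\overline{X}_\infty=\overline{X}\cap H_\infty$ its part at infinity, so $\dim\overline{X}_\infty\le k-1$. A mapping $f_L$ is non-proper exactly when some arc $\gamma(t)\to\infty$ in $X$ keeps $f_L(\gamma(t))$ bounded; writing $\gamma(t)\sim t^{-p}v$ with direction $\xi=[v]\in\overline{X}_\infty$, boundedness of $F_i(\gamma)+L_i(\gamma)$ forces, component by component, the leading (at most linear) coefficient along $\gamma$ to vanish, which is one affine-linear condition on the $i$-th row of $L$. Imposing all $m$ of them and letting $\xi$ range over $\overline{X}_\infty$ confines the non-proper $L$ to a set of dimension at most $(nm-m)+(k-1)=nm-(m-k)-1<nm$; here the hypothesis $m\ge k$ is exactly what keeps this below $\dim P$. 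When $F$ is linear this is the classical fact that a generic linear projection to $\C^m$, $m\ge k$, restricts to a finite map on $X$ (its kernel, a generic plane of dimension $n-m$, misses $\overline{X}_\infty$ because $(n-m-1)+(k-1)<n-1$); the higher-degree components of $F$ only cut down the set of dangerous directions $\xi$, and making the resulting count fully rigorous is the delicate point I expect to fight with.

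Being in the first alternative, part (b) of Theorem~\ref{rodzina} provides a Zariski open dense $W\subset P$ on which all the $f_L$ are mutually topologically equivalent. Intersecting $W$ with the open dense finiteness locus obtained above produces a Zariski open dense $U\subset\mathcal{L}(\C^n,\C^m)$ on which every $F+L$ is finite and all are topologically equivalent, which is exactly (a) and (b). To summarise, the only genuinely hard step is the dimension count at infinity that forces generic properness; the remainder is the construction of the family $\Phi$ and the appeal to Theorem~\ref{rodzina}.
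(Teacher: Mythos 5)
Your overall architecture---realising $\{F+L\}$ as the algebraic family $\Phi(L,x)=(L,F(x)+L(x))$ over $P={\mathcal L}(\C^n,\C^m)$ and invoking Theorem~\ref{rodzina} to upgrade ``proper for a dense set of $L$'' to ``proper and mutually topologically equivalent on a Zariski open dense set''---is exactly the paper's. The gap is at the step you yourself single out as the crux: proving that $F+L$ is proper for generic $L$. Your count at infinity is not correct as stated. You claim that each direction $\xi\in\overline{X}_\infty$ imposes $m$ independent affine-linear conditions on $L$, so that the bad locus has dimension at most $(nm-m)+\dim\overline{X}_\infty$. But the affine constants in those conditions depend on the arc, not merely on its direction, and for a fixed $\xi$ the arcs tending to $\xi$ form a positive-dimensional family; the union of the corresponding codimension-$m$ affine subspaces of $P$ can then have codimension strictly less than $m$. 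Concretely, take $X=\C^2$, $F(x,y)=(x^2y,0)$ and $L=(ax+by,cx+dy)$. Whenever $d=0$, choosing $x_0$ with $x_0^2=-b$ makes $F+L$ constant on the whole line $\{x=x_0\}$, so $F+L$ is non-proper; thus the single point $[0:1:0]\in\overline{X}_\infty$ already accounts for the codimension-one set $\{d=0\}$ of bad $L$, not a codimension-$2$ one. The bookkeeping ``codimension $m$ per direction, plus $k-1$ for the directions'' therefore does not yield your bound, and the crux remains unproved.

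The paper sidesteps this entirely with a graph trick worth adopting: let $\tilde X=\{(x,F(x))\}\subset\C^{n+m}$, a closed $k$-dimensional subvariety isomorphic to $X$. Since $k\le m$, a generic linear projection $\pi:\C^{n+m}\to\C^m$ restricts to a proper (indeed finite) map on $\tilde X$; here the count at infinity is the easy, purely linear one, since a generic center of projection misses the $(k-1)$-dimensional set $\overline{\tilde X}_\infty$. Composing with $x\mapsto(x,F(x))$ shows that $x\mapsto AF(x)+Bx$ is proper for generic $(A,B)\in{\mathcal L}(\C^m,\C^m)\times{\mathcal L}(\C^n,\C^m)$; for generic $(A,B)$ the matrix $A$ is invertible, composing with the isomorphism $A^{-1}$ preserves properness, so $F+A^{-1}B$ is proper, and $A^{-1}B$ sweeps out a dense subset of ${\mathcal L}(\C^n,\C^m)$. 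That density is all Theorem~\ref{rodzina} needs; your remaining steps (the dichotomy, finiteness from properness plus quasi-finiteness, and part (b)) then go through as you describe.
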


\begin{proof}
Let $G: X\ni x \mapsto (x, F(x))\in X\times \C^m$ and $\tilde{X}=graph(G)\cong X.$ Since $m\ge \dim \tilde{X}$  a generic linear projection 
$\pi : \tilde{X}\to \C^m$ is a proper mapping. Hence also the mapping $\pi\circ G$ is proper.
Consequently we get  that for a general matrix $A\in GL(m,m)$ and general linear mapping $L\in {\mathcal L}(\C^n,\C^m)$ the mapping $H(A,L)=A(F_1,...,F_m)^T+L$ is proper. Hence also the mapping $A^{-1}\circ H(A,L)$ is proper. This means that the mapping $F+ A^{-1}(l_1,...,l_m)^T$ (where $L=(l_1,...,l_m)$) is proper. But we can specialize the matrix $A$ to the identity and the mapping $L$ to a given linear mapping $L_0\in {\mathcal L}(\C^n,\C^m)\}$. Hence we see that there is at least dense subset of linear mappings $L\in {\mathcal L}(\C^n,\C^m)$ such that the mapping $F+L: X\to \C^m$ is proper. Consider the algebraic family ${\mathcal F}=\{ F+L, L\in {\mathcal L}(\C^n,\C^m)\}$. By Theorem
\ref{rodzina} we see that there exists a Zariski dense open subset $U\subset {\mathcal L}(\C^n,\C^m)$ such that every mapping $F+L; \ L\in U$ is proper and  all these mappings are topologically equivalent.
\end{proof}

\end{document}